\theoremstyle{plain}
\newtheorem{theorem}{Theorem}
\newtheorem{lemma}[theorem]{Lemma}
\theoremstyle{definition}
\begin{document}
\baselineskip 18pt

\title[The Krengel's theorem for compact operators ]
      {The Krengel's theorem for compact operators between locally solid vector lattices}

\author[O.~Zabeti]{Omid Zabeti}


\address[O.~Zabeti]
  {Department of Mathematics, Faculty of Mathematics, Statistics, and Computer science,
   University of Sistan and Baluchestan, Zahedan,
   P.O. Box 98135-674. Iran}
\email{o.zabeti@gmail.com}

\keywords{Compact operator, the Krengel's theorem, the $AM$-property.}
\subjclass[2020]{Primary: 46B42. Secondary: 47B65.}

\begin{abstract}
Suppose $X$ is a locally solid vector lattice. It is known that there are several non-equivalent notions for compact operators on $X$. Furthermore, notion of the $AM$-property in $X$ as an extension for the $AM$-spaces in Banach lattices has been considered, recently. In this paper, we establish a variant of the known Krengel's theorem for different types of compact operators between locally solid vector lattices.

\end{abstract}

\date{\today}

\maketitle
\section{motivation and introduction}
Let us start with some motivation. Let $E$ be a Banach lattice. $E$ is called an $AM$-space provided that for each positive $x,y\in E_{+}$, we have $\|x\vee y\|=\|x\|\vee \|y\|$. The remarkable Kakutani's theorem states that every $AM$-space is a closed sublattice of $C(K)$ for some compact Hausdorff space $K$. Now, suppose $E$ is a Banach lattice and $F$ is an $AM$-space. The Krengel's theorem states that every compact operator $T:E\to F$ has a modulus which is defined by the Riesz-Kantorovich formulae; that is $|T|(x)=\sup\{|Ty|: |y|\leq x\}$ for each $x\in E_{+}$. So, we conclude that $AM$-spaces have many interesting properties among the category of all Banach lattices. Therefore, it is fascinating and significant to consider the $AM$-spaces and numerous applications in the operator theory to the locally solid vector lattices and operators between them. The first step has been done in \cite{Z1}; namely, the $AM$-property which is the right extension for the $AM$-spaces. Moreover, Observe that there are several different ways to define bounded and compact operators between locally solid vector lattices. Some applications of the $AM$-property in these classes of operators have been obtained in \cite{EGZ,Z1}.
In this paper, we are going to generalize the known Krengel's theorem \cite[Theorem 5.7]{AB} for different types of compact operators between locally solid vector lattices.

For undefined terminology and related notions, see \cite{AB1,AB}. All locally solid vector lattices in this note are assumed to be Hausdorff.

\section{main result}
First, we recall the notion of the $AM$-property; for more details, see \cite{Z1}.
Suppose $X$ is a locally solid vector lattice. We say that $X$ has the  $AM$-property provided that for every bounded set $B\subseteq X$, $B^{\vee}$ is also bounded with the same scalars; namely, given a zero neighborhood $V$ and any positive scalar $\alpha$ with $B\subseteq \alpha V$, we have  $B^{\vee}\subseteq \alpha V$. Note that by $B^{\vee}$, we mean the set of all finite suprema of elements of $B$. 

In this part, we recall the following useful fact; for more details, see \cite[Lemma 3]{Z2}.
\begin{lemma}\label{5001}
Suppose $X$ is a locally solid vector lattice with the $AM$-property and $U$ is an arbitrary solid zero neighborhood in $X$. Then, for each $m\in \Bbb N$, $U\vee\ldots\vee U=U$, in which $U$ is appeared $m$-times.
\end{lemma}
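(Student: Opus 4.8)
The plan is to establish the two set inclusions $U\subseteq U\vee\cdots\vee U$ and $U\vee\cdots\vee U\subseteq U$ separately, where throughout $U\vee\cdots\vee U$ denotes $\{u_1\vee\cdots\vee u_m \mid u_1,\ldots,u_m\in U\}$ ($U$ appearing $m$ times).

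The inclusion $U\subseteq U\vee\cdots\vee U$ is immediate: for any $u\in U$ we have $u=u\vee u\vee\cdots\vee u$ (the supremum of $m$ copies of $u$), so $u$ lies in the right-hand side. This direction uses nothing about the topology or the $AM$-property.

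For the reverse inclusion, fix $u_1,\ldots,u_m\in U$ and consider the finite set $B=\{u_1,\ldots,u_m\}$. Since every finite subset of a topological vector space is topologically bounded, $B$ is bounded, and plainly $B\subseteq U=1\cdot U$. Now invoke the $AM$-property with the (solid) zero neighborhood $V=U$ and the scalar $\alpha=1$: it yields $B^{\vee}\subseteq 1\cdot U=U$. Since $u_1\vee\cdots\vee u_m$ is a finite supremum of elements of $B$, it belongs to $B^{\vee}$, hence to $U$. As $u_1,\ldots,u_m$ were arbitrary, $U\vee\cdots\vee U\subseteq U$, and combining this with the first inclusion gives the claimed equality.

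There is essentially no serious obstacle; the only point worth flagging is the (standard) fact that finite sets are topologically bounded, which is precisely what lets us apply the $AM$-property with the neighborhood $U$ itself and with scalar $1$. One could instead argue by induction on $m$, reducing to the case $m=2$, i.e.\ $U\vee U=U$, but the one-shot argument above makes that unnecessary; note also that solidity of $U$ is not actually used here, it is simply the natural hypothesis since the solid zero neighborhoods form a base of the topology.
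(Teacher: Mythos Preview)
Your argument is correct: both inclusions are established cleanly, and the key step---applying the $AM$-property to the finite (hence bounded) set $B=\{u_1,\ldots,u_m\}\subseteq 1\cdot U$ with $V=U$ and $\alpha=1$---is exactly what the definition gives. Note that the paper itself does not prove this lemma; it is merely quoted from \cite[Lemma~3]{Z2}, so there is no in-paper argument to compare against, but your direct one-shot application of the definition is the natural proof and presumably coincides with (or is at least as simple as) the one in the cited reference. Your side remark that solidity of $U$ is not used in the argument is also accurate; the hypothesis is there only because solid zero neighborhoods form a neighborhood base and because solidity is what is needed in the subsequent application (Lemma~\ref{4001}).
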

Moreover, we have the following useful inequality in Archimedean vector lattices.
\begin{lemma}\label{7001}
suppose $E$ is an Archimedean vector lattice. Then for $x_1,\ldots,x_n$ and $y_1,\ldots,y_n$ in $E$, the following inequality holds.
\[x_1\vee\ldots\vee x_n-y_1\vee\ldots\vee y_n\leq (x_1-y_1)\vee\ldots\vee (x_n-y_n).\]
\end{lemma}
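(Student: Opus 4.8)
The plan is to reduce everything to the translation-invariance of the lattice supremum, so that no genuine use of the Archimedean hypothesis is needed; it is carried along only because it is the standing assumption of the excerpt. To set up, write $z := (x_1 - y_1)\vee\cdots\vee(x_n - y_n)$ and $y := y_1\vee\cdots\vee y_n$.

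First I would observe that for each fixed index $i$ one has the trivial identity $x_i = y_i + (x_i - y_i)$, and then bound each summand from above using the definition of the finite joins: $y_i \le y$ and $x_i - y_i \le z$. Adding these two inequalities gives $x_i \le y + z$ for every $i = 1,\ldots,n$.

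Next, since the element $y + z$ on the right does not depend on $i$ and is an upper bound for all the $x_i$, it dominates their supremum; that is, $x_1\vee\cdots\vee x_n \le y + z$. Subtracting $y$ from both sides (again using translation-invariance of $\le$) yields exactly $x_1\vee\cdots\vee x_n - y_1\vee\cdots\vee y_n \le z$, which is the asserted inequality.

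The hard part is essentially nil: the only thing to check is that all the finite joins appearing in the statement actually exist, which holds in any vector lattice, so in particular the Archimedean property plays no role. If one prefers a more pedestrian route, the same argument handles the case $n = 2$, and the general case then follows by an immediate induction on $n$, splitting $x_1\vee\cdots\vee x_n = (x_1\vee\cdots\vee x_{n-1})\vee x_n$ and combining the two-term inequality with the induction hypothesis.
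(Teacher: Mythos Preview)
Your argument is correct. The paper, however, proceeds differently: it first establishes the case $n=2$ by the explicit computation
\[
x_1\vee x_2 - y_1\vee y_2 = \bigl((x_1-y_1)\wedge(x_1-y_2)\bigr)\vee\bigl((x_2-y_1)\wedge(x_2-y_2)\bigr) \le (x_1-y_1)\vee(x_2-y_2),
\]
using the identities $a\vee b - c = (a-c)\vee(b-c)$ and $-(a\vee b)=(-a)\wedge(-b)$, and then runs an induction, applying the two-term inequality to the pair $\bigl(x_1\vee\cdots\vee x_k,\,x_{k+1}\bigr)$ versus $\bigl(y_1\vee\cdots\vee y_k,\,y_{k+1}\bigr)$ and invoking the inductive hypothesis on the first component. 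Your route is more economical: by bounding each $x_i = y_i + (x_i - y_i) \le y + z$ directly and passing to the join, you avoid both the explicit lattice-identity manipulation and the induction altogether. The paper's approach has the minor expository advantage of displaying the exact equality in the $n=2$ case, but your method is shorter and makes the underlying reason (translation-invariance of $\le$) completely transparent. Your remark that the Archimedean hypothesis is superfluous is also correct; the paper never uses it in its proof either.
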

\begin{proof}
We proceed the proof by induction. For $n=2$, we have
\[x_1\vee x_2-y_1\vee y_2=(x_1-(y_1\vee y_2))\vee (x_2-(y_1\vee y_2))=(x_1+((-y_1)\wedge (-y_2)))\vee(x_2+((-y_1)\wedge (-y_2)))=
\]
\[((x_1-y_1)\wedge (x_1-y_2))\vee((x_2-y_1)\wedge(x_2-y_2))\leq (x_1-y_1)\vee(x_2-y_2).\]
Now, suppose for $n=k$, the statement is valid. We need prove it for $n=k+1$. By using validness of the result for $n=2$ and $n=k$, we have
\[x_1\vee\ldots\vee x_k\vee x_{k+1}-y_1\vee\ldots\vee y_k\vee y_{k+1}\leq ((x_1\vee\ldots\vee x_k)-(y_1\vee\ldots\vee y_k))\vee (x_{k+1}-y_{k+1})\]
\[\leq (x_1-y_1)\vee\ldots\vee(x_k-y_k)\vee(x_{k+1}-y_{k+1}).\]
\end{proof}
Recall that a subset $B$ of a topological vector space $X$ is said to be totally bounded if for each arbitrary zero neighborhood $V\subseteq X$ there is a finite set $F$ such that $B\subseteq F+V$; for more information, see \cite{AB}.
\begin{lemma}\label{4001}
Suppose $X$ is a locally solid vector lattice with the $AM$-property. If $B\subseteq X$ is totally bounded, then so is $B^{\vee}$. In particular, $\sup B$ exists in $X$ and $\sup B\in \overline{B^{\vee}}$.
\end{lemma}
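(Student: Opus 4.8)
The plan is to treat the two claims in turn. For the first, fix a zero neighbourhood $V$ and, using local solidity, choose a solid zero neighbourhood $U\subseteq V$; total boundedness of $B$ then gives a finite $F\subseteq X$ with $B\subseteq F+U$. Take an arbitrary $b_1\vee\dots\vee b_n\in B^{\vee}$ and write $b_i=f_i+u_i$ with $f_i\in F$ and $u_i\in U$. Since every Hausdorff locally solid vector lattice is Archimedean, Lemma \ref{7001} applies to $X$, and applying it to $(b_i)$ against $(f_i)$ and to $(f_i)$ against $(b_i)$ gives
\[\bigabs{\,b_1\vee\dots\vee b_n-f_1\vee\dots\vee f_n\,}\le\abs{u_1}\vee\dots\vee\abs{u_n}.\]
As $U$ is solid, each $\abs{u_i}\in U$, so the right-hand side lies in $U\vee\dots\vee U=U$ by Lemma \ref{5001}, and solidity of $U$ then gives $b_1\vee\dots\vee b_n\in(f_1\vee\dots\vee f_n)+U$. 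Since $F$ is finite, its set $F^{\vee}$ of finite suprema is finite, so $B^{\vee}\subseteq F^{\vee}+U\subseteq F^{\vee}+V$; hence $B^{\vee}$ is totally bounded.

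For the second claim, observe that $B^{\vee}$ is closed under finite suprema, hence upward directed, and regard it as a net indexed by itself. I would first prove this net is Cauchy. Fix a solid zero neighbourhood $W$. Using $\abs{s-t}\le\bigl((s\vee t)-s\bigr)\vee\bigl((s\vee t)-t\bigr)$ together with Lemma \ref{5001}, it suffices to produce $s_0\in B^{\vee}$ with $v-s\in W$ whenever $s_0\le s\le v$ in $B^{\vee}$. If there were no such $s_0$, one could inductively build an increasing sequence $c_0\le c_1\le c_2\le\cdots$ in $B^{\vee}$ with $c_{k+1}-c_k\notin W$ for infinitely many $k$; picking a solid $W'$ with $W'+W'\subseteq W$ and using total boundedness to place the $c_k$ in finitely many translates $f_i+W'$, one finds $p<q$ with $c_p,c_q$ in a common translate and some bad index $k$ in $[p,q)$, so that $0\le c_{k+1}-c_k\le c_q-c_p\in W$ and hence $c_{k+1}-c_k\in W$ by solidity — a contradiction. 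Thus the net is Cauchy and converges to some $x\in\overline{B^{\vee}}$. Because the positive cone of $X$ is closed, taking limits in $s-b\ge 0$ along the cofinal subnet $\{s\in B^{\vee}:s\ge b\}$ gives $x\ge b$ for all $b\in B$, while taking limits in $y-s\ge 0$ for any upper bound $y$ of $B$ gives $y\ge x$; therefore $x=\sup B$ and $x\in\overline{B^{\vee}}$.

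The first paragraph is routine once Lemmas \ref{7001} and \ref{5001} and the finiteness of $F^{\vee}$ are available. The main obstacle is the Cauchy step: one must prevent the net from oscillating indefinitely among the finitely many translates supplied by total boundedness, and that is precisely where upward directedness, solidity, and Lemma \ref{5001} are used. I should also note that turning the Cauchy net into an actual point of $X$ — and so obtaining $\sup B$ \emph{in} $X$ — requires $X$ to be topologically complete; without completeness this step, and even the existence of $\sup B$ in $X$, can fail (for example in the finitely supported sequences under the supremum norm), so the argument is understood under the assumption that $X$ is complete.
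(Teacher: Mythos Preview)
Your first paragraph is essentially the paper's argument: cover $B$ by $F+U$, apply Lemma~\ref{7001} in both directions, and use Lemma~\ref{5001} to land in $U$. The only cosmetic difference is that you cover $B^{\vee}$ by the finite set $F^{\vee}$, whereas the paper covers it by the single point $z_0=\bigvee F$; your version is in fact cleaner, since the paper's displayed inequalities only compare $x_1\vee\dots\vee x_n$ with the supremum of the \emph{chosen} $z_i$'s, not with $z_0$ itself.

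Your second paragraph takes a genuinely different route. The paper simply says that $\overline{B^{\vee}}$ is compact, extracts a convergent subnet of the increasing net $(g_\alpha)$, and identifies the limit with $\sup B$ via the closedness of the positive cone. You instead prove directly that the increasing net $B^{\vee}$ is Cauchy, using a pigeonhole argument against a finite $W'$-cover to rule out infinitely many ``large'' jumps in any increasing sequence from $B^{\vee}$; then you invoke completeness to obtain the limit. Both arguments are correct and both ultimately rest on completeness of $X$: the paper needs it to pass from ``totally bounded'' to ``compact closure'', and you need it to pass from ``Cauchy'' to ``convergent''. Your explicit acknowledgement of this hypothesis (and the counterexample of finitely supported sequences) is a point in your favour, since the paper leaves it tacit. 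The paper's route is shorter; yours is more self-contained in that it never appeals to compactness, only to the order structure, solidity, and Lemma~\ref{5001}.
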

\begin{proof}
Choose arbitrary solid zero neighborhood $U\subseteq X$. By the assumption, there exists a finite set $F\subseteq X$ such that $B\subseteq F+U$. Assume that $F=\{z_1,\ldots,z_m\}$. Put $z_0=z_1\vee\ldots\vee z_m$. We claim that $B^{\vee}\subseteq \{z_0\}+U$.
Given any $x_1,\ldots, x_n \in B$. There are some $z_1,\ldots, z_n$ ( possibly with the repetition), such that $x_i-z_i\in U$ for all $i=1,\ldots, n$.
Therefore, by using Lemma \ref{7001} and Lemma \ref{5001}, we have
\[x_1\vee\ldots \vee x_n -z_1\vee\ldots\vee z_n\leq (-z_1+x_1)\vee\ldots\vee(-z_n+x_n)\in U\vee\ldots\vee U=U.\]
Since $U$ is solid, similarly, we have
\[z_1\vee\ldots \vee z_n -x_1\vee\ldots\vee x_n\leq (z_1-x_1)\vee\ldots\vee(z_n-x_n)\in U\vee\ldots\vee U=U.\]
This means that $(x_1\vee\ldots\vee x_n)-z_0\in U$.

Now, assume that $D$ is the set of all finite subsets of $B$ directed by the inclusion $\subseteq$. For each $\alpha \in D$, put $g_{\alpha}=\sup \alpha$. Observe that $\{g_{\alpha}\}\subseteq B^{\vee}$ satisfies $g_{\alpha}\uparrow$. By compactness of $\overline{B^{\vee}}$, there exists a subnet of $(g_{\alpha})$ that converges to some $g\in \overline{B^{\vee}}$. Therefore, $\sup B=\sup B^{\vee}=\sup \{g_{\alpha}\}=g$.
\end{proof}
Now, we are able to consider a version of the Krengle's theorem (\cite[Theorem 5.7]{AB})for each class of compact operators between locally solid vector lattices. First, we recall some preliminaries which are needed in the sequel.

Suppose $X$ and $Y$ are locally solid vector lattices and $T:X\to Y$ is a linear operator. $T$ is called $nb$-bounded if there is a zero neighborhood $U\subseteq X$ such that $T(U)$ is also bounded in $Y$; $T$ is said to be $bb$-bounded if it maps bounded sets into bounded sets.

Moreover, a linear operator $T:X\to Y$ is said to be $nb$-compact provided that there is a zero neighborhood $U\subseteq X$ such that $\overline{T(U)}$ is compact in $Y$; $T$ is $bb$-compact if for every bounded set $B\subseteq X$, $\overline{T(B)}$ is compact in $Y$. It is obvious that every $nb$-compact operator is $nb$-bounded and every $bb$-compact operator in $bb$-bounded. These classes of operators enjoy some topological and lattice structures; for a detailed exposition as well as related notions about bounded and compact operators see \cite{EGZ,Tr1,Z1}.

Krengel has proved that when the range of a compact operator $T$ between Banach lattices is an $AM$-space, then the modulus of $T$ exists and is also compact (see \cite[Theroem 5.7]{AB}). In the following, we prove this remarkable result for $nb$-compact operators as well as for $bb$-compact operators when the range space has the $AM$-property.
\begin{theorem}
Suppose $X$ and $Y$ are locally solid vector lattices such that $Y$ possesses the $AM$-property and $T:X\to Y$ is an $nb$-compact operator. Then the modulus of $T$ exists and is also $nb$-compact.
\end{theorem}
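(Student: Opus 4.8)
The plan is to treat two things in turn: the existence of the modulus $|T|$ (given by the Riesz--Kantorovich formula), and then the $nb$-compactness of $|T|$. Throughout, fix a \emph{solid} zero neighbourhood $U\subseteq X$ with $\overline{T(U)}$ compact in $Y$; such a $U$ exists because $T$ is $nb$-compact and solid zero neighbourhoods form a base at $0$ in a locally solid vector lattice (shrinking a given neighbourhood to a solid one only replaces $\overline{T(U)}$ by a closed subset of a compact set).

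First I would establish that $\sup\{Ty:|y|\le x\}$ exists in $Y$ for every $x\in X_{+}$. Given such an $x$, since $U$ is absorbing there is $\lambda>0$ with $\frac{1}{\lambda}x\in U$, and then solidity of $U$ forces $[-x,x]\subseteq\lambda U$; hence $T[-x,x]\subseteq\lambda\,\overline{T(U)}$ sits inside a compact set and is in particular totally bounded. Since $Y$ has the $AM$-property and is Archimedean (being Hausdorff locally solid), Lemma~\ref{4001} applies to $B=T[-x,x]$ and gives that $\sup T[-x,x]$ exists in $Y$. The sets $\{Ty:|y|\le x\}$ and $\{|Ty|:|y|\le x\}$ have the same upper bounds (because $|Ty|=Ty\vee T(-y)$, $|-y|=|y|$, and $Ty\le|Ty|$), so this supremum equals $\sup\{|Ty|:|y|\le x\}$ as well. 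Having these suprema for all $x\in X_{+}$, the Riesz--Kantorovich theorem (\cite[Theorem~1.14]{AB}) yields that $|T|$ exists and is the positive operator with $|T|(x)=\sup\{|Ty|:|y|\le x\}$ for $x\in X_{+}$.

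For the $nb$-compactness of $|T|$, I would show that the same $U$ works, i.e. that $\overline{|T|(U)}$ is compact in $Y$. Since $T(U)$ is totally bounded, Lemma~\ref{4001} shows that the set $(T(U))^{\vee}$ of all finite suprema of $T(U)$ is again totally bounded and that $\overline{(T(U))^{\vee}}$ is compact. If $x\in U\cap X_{+}$, then $[-x,x]\subseteq U$ by solidity, so $(T[-x,x])^{\vee}\subseteq (T(U))^{\vee}$, and Lemma~\ref{4001} places $|T|(x)=\sup T[-x,x]$ in $\overline{(T[-x,x])^{\vee}}\subseteq\overline{(T(U))^{\vee}}$. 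Hence $|T|(U\cap X_{+})\subseteq\overline{(T(U))^{\vee}}$, so $\overline{|T|(U\cap X_{+})}$ is a closed subset of a compact set and is compact. For an arbitrary $x\in U$ write $x=x^{+}-x^{-}$; solidity of $U$ gives $x^{+},x^{-}\in U\cap X_{+}$, whence $|T|(x)=|T|(x^{+})-|T|(x^{-})\in |T|(U\cap X_{+})-|T|(U\cap X_{+})$. Therefore $\overline{|T|(U)}$ is contained in $\overline{|T|(U\cap X_{+})}-\overline{|T|(U\cap X_{+})}$, which is the image of a product of two compact sets under the continuous subtraction map of $Y$, hence compact; so $\overline{|T|(U)}$ is compact and $|T|$ is $nb$-compact.

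The step I expect to be the genuine obstacle is keeping $|T|(U\cap X_{+})$ inside a \emph{compact} set: this is exactly where the $AM$-property of $Y$ is indispensable, through Lemma~\ref{4001}, which both produces the suprema $|T|(x)=\sup T[-x,x]$ and certifies that the set of finite suprema of $T(U)$ stays totally bounded with compact closure. Once that is secured, the reduction to $x=x^{+}-x^{-}$ and the passage to closures are routine, and the existence of the modulus follows immediately from Riesz--Kantorovich once the relevant suprema are known to exist.
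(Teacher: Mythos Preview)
Your proposal is correct and follows essentially the same route as the paper: pick a solid zero neighbourhood $U$ with $\overline{T(U)}$ compact, use Lemma~\ref{4001} to get both the existence of $\sup T[-x,x]$ and the compactness of $\overline{(T(U))^{\vee}}$, place $|T|(U_{+})$ inside that compact set, and finish via $U\subseteq U_{+}-U_{+}$. You are simply more explicit than the paper at a few points (why $U$ may be taken solid, why the supremum exists for \emph{every} $x\in X_{+}$ via absorption, and the closure/subtraction step at the end), but the skeleton is identical.
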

\begin{proof}
 There exists a zero neighborhood $U\subseteq X$ such that $T(U)$ is totally bounded in $Y$.
 Observe that for each $x\in U_{+}$, $T[-x,x]$ is totally bounded in $Y$ so that by Lemma \ref{4001}, the supremum $|T|(x)=\sup\{|Ty|: |y|\leq x\}=\sup T[-x,x]$ exists in $Y$. Thus, by \cite[Theorem 1.14]{AB}, the modulus of $T$ exists. According to Lemma \ref{4001}, $\overline{{T(U)}^{\vee}}$ is also compact and $|T|(x)\in \overline{{T(U)}^{\vee}}$. Therefore, $|T|(U_{+})\subseteq \overline{{T(U)}^{\vee}}$. Since $U\subseteq U_{+}-U_{+}$, the proof would be complete.
\end{proof}
\begin{theorem}
Suppose $X$ and $Y$ are locally solid vector lattices such that $Y$ possesses the $AM$-property and $T:X\to Y$ is a $bb$-compact operator. Then the modulus of $T$ exists and is also $bb$-compact.
\end{theorem}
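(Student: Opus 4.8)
The plan is to mimic the proof of the previous theorem, replacing the single zero neighborhood $U$ by an arbitrary bounded set $B$. So first I would fix an arbitrary bounded set $B\subseteq X$ and aim to show that $\overline{|T|(B)}$ is compact in $Y$. The natural candidate for a compact set containing $|T|(B)$ is $\overline{(T(B_1))^{\vee}}$, where $B_1$ is a suitable bounded set built from $B$ that is large enough to control the intervals $[-x,x]$ for $x$ ranging over the ``positive part'' of $B$.

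The key steps, in order, are as follows. First, existence of the modulus: for each $x\in B$ with $x\ge 0$ — more precisely, for each $x$ in the bounded set $B_{+}$ obtained from $B$ — the set $T[-x,x]$ is a subset of $T$ applied to a bounded set (since $[-x,x]$ is bounded and order intervals are bounded in a locally solid vector lattice), hence totally bounded by $bb$-compactness of $T$; by Lemma \ref{4001} the supremum $|T|(x)=\sup T[-x,x]$ exists in $Y$, and then \cite[Theorem 1.14]{AB} gives that the modulus $|T|$ exists as an operator. Second, I would introduce the bounded set $B_{1}=[-b,b]$-type hull: concretely, since $B$ is bounded, so is $B_{+}=\{x_{+}: x\in B\}\cup\{(-x)_{+}:x\in B\}$ (using that the lattice operations are continuous/bounded on bounded sets in a locally solid vector lattice), and then the solid hull $\operatorname{Sol}(B)$ or the set $\bigcup\{[-x,x]: x\in B_{+}\}$ is again bounded. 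Third, $T$ maps this bounded set to a totally bounded set, so by Lemma \ref{4001} its set of finite suprema has compact closure, and $|T|(x)\in\overline{(T(\text{that bounded set}))^{\vee}}$ for every relevant $x$; since $B\subseteq B_{+}-B_{+}$ and $|T|$ is linear, $\overline{|T|(B)}$ lies inside a finite sum of such compact sets, hence is compact.

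The main obstacle — and the only place requiring genuine care — is verifying that the auxiliary set $\bigcup_{x\in B_{+}}[-x,x]$ (equivalently the solid hull of a suitable bounded set derived from $B$) is itself bounded in $X$. This needs the fact that in a locally solid vector lattice the solid hull of a bounded set is bounded, together with the fact that $B$ bounded implies $B_{+}=\{x_{+}:x\in B\}$ bounded; the latter follows because $x_{+}=\tfrac12(x+|x|)$ and taking absolute values maps a set contained in a solid neighborhood $\alpha V$ into $\alpha V$ again. Once this boundedness is in hand, everything else is a direct transcription of the $nb$-compact argument, using Lemma \ref{4001} in place of any special property of neighborhoods. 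I would close by noting that linearity of $|T|$ and $B\subseteq B_{+}-B_{+}$ reduce the compactness of $\overline{|T|(B)}$ to that of $\overline{|T|(B_{+})}\subseteq\overline{(T(\bigcup_{x\in B_{+}}[-x,x]))^{\vee}}$, completing the proof.
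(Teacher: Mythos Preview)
Your proposal is correct and follows essentially the same route as the paper's proof. The only cosmetic difference is that the paper dispatches your ``main obstacle'' in one stroke by replacing $B$ with its solid hull $\operatorname{Sol}(B)$ at the outset (so that $[-x,x]\subseteq B$ for every $x\in B_{+}$ automatically), whereas you build the auxiliary set $\bigcup_{x\in B_{+}}[-x,x]$ by hand; either way one is invoking the standard fact that solid hulls of bounded sets are bounded in a locally solid vector lattice, and the remainder of the argument---Lemma~\ref{4001}, \cite[Theorem~1.14]{AB}, and the decomposition $B\subseteq B_{+}-B_{+}$---is identical.
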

\begin{proof}
The proof essentially has the same line. Fix a bounded set $B\subseteq X$ such that $T(B)$ is totally bounded in $Y$; by replacing $B$ with $Sol(B)$, if necessary, we may assume that $B$ is solid.
 Observe that for each $x\in B_{+}$, $T[-x,x]$ is totally bounded in $Y$ so that by Lemma \ref{4001}, the supremum $|T|(x)=\sup\{|Ty|: |y|\leq x\}=\sup T[-x,x]$ exists in $Y$. Thus, by \cite[Theorem 1.14]{AB}, the modulus of $T$ exists. According to Lemma \ref{4001}, $\overline{{T(B)}^{\vee}}$ is also compact and $|T|(x)\in \overline{{T(B)}^{\vee}}$. Therefore, $|T|(B_{+})\subseteq \overline{{T(B)}^{\vee}}$. Since $B\subseteq B_{+}-B_{+}$, we have the desired result.
\end{proof}

\end{document}